\documentclass[12pt,reqno]{amsart} 
\usepackage{amsmath, amsthm, amscd, amsfonts, amssymb, graphicx, color}
\mathsurround=3pt
\parindent=20pt
\usepackage{amsmath}
\usepackage{amssymb}
\baselineskip=1.5in
\textheight 650pt
\textwidth 400pt

\topmargin0pt

\newtheorem{thm}{Theorem}
\newtheorem{lem}[thm]{Lemma}
\newtheorem{prop}[thm]{Proposition}
\newtheorem{cor}[thm]{Corollary}
\newcommand{\vertiii}[1]{{\left\vert\kern-0.25ex\left\vert\kern-0.25ex\left\vert
#1 \right\vert\kern-0.25ex\right\vert\kern-0.25ex\right\vert}}

\def \diag  {\text {\rm diag}}
\def \e   {\text {\rm e}}

\def \In   {\text {\rm In}}

\def \min   {\text {\rm min}}

\def \diag  {\text {\rm diag}}

\def \tr    {\text {\rm tr}}

\begin{document}

\title[]{Inertia of Kwong matrices}

\author[Rajendra Bhatia]{Rajendra Bhatia}
\address{Ashoka University, Sonepat\\ Haryana, 131029, India}

\email{rajendra.bhatia@ashoka.edu.in}

\author{Tanvi Jain}

\address{Indian Statistical Institute, New Delhi 110016, India}

\email{tanvi@isid.ac.in}

\subjclass[2010] {15A18, 15A48, 15A57, 42A82}

\keywords{Kwong Matrix, inertia, positive definite matrix, conditionally positive definite matrix, Loewner matrix, Sylvester's law, Vandermonde matrix.}


\begin{abstract}
Let $r$ be any real number and for any $n$ let $p_1,\ldots,p_n$ be distinct positive numbers.
A Kwong matrix is the $n\times n$ matrix whose $(i,j)$ entry is $(p_i^r+p_j^r)/(p_i+p_j).$
We determine the signatures of eigenvalues of all such matrices.
The corresponding problem for the family of Loewner matrices $\begin{bmatrix}(p_i^r-p_j^r)/(p_i-p_j)\end{bmatrix}$ has been solved earlier.
\end{abstract}
\maketitle

\section{Introduction}

Let $f$ be a nonnegative $C^1$ function on $(0,\infty).$
Let $n$ be a positive integer and $p_1<p_2<\cdots<p_n$ distinct positive real numbers.
The $n\times n$ matrix
\begin{equation}
L_f(p_1,\ldots,p_n)=\begin{bmatrix}\frac{f(p_i)-f(p_j)}{p_i-p_j}\end{bmatrix}\label{eql1}
\end{equation}
is called a {\it Loewner matrix} associated with $f.$
These matrices play an important role in several areas of analysis,
one of them being Loewner's theory of operator monotone functions.
A central theorem in this theory asserts that $f$ is operator monotone if and only if all Loewner matrices associated with $f$ are positive semidefinite.
See \cite{rbh}, \cite{rbh1} and \cite{sim}.

Closely related to Loewner matrices are the matrices
\begin{equation}
K_f(p_1,\ldots,p_n)=\begin{bmatrix}\frac{f(p_i)+f(p_j)}{p_i+p_j}\end{bmatrix}.\label{eqk1}
\end{equation}
These too have been studied in several papers.
In \cite{kwong} Kwong showed that all matrices $K_f$ are positive semidefinite
if (but not only if) $f$ is operator monotone.
Because of this the matrices $K_f$ are sometimes called {\it Kwong matrices}.
Audenaert \cite{a} has characterised all functions $f$ for which all $K_f$ are positive semidefinite.

Of particular interest are the functions $f(t)=t^r,$ where $r$ is any real number.
For these functions we denote $L_f$ and $K_f$ by $L_r$ and $K_r,$ respectively.
Thus
\begin{equation}
L_r(p_1,\ldots,p_n)=\begin{bmatrix}\frac{p_i^r-p_j^r}{p_i-p_j}\end{bmatrix},\label{eql2}
\end{equation}
and\begin{equation}
K_r(p_1,\ldots,p_n)=\begin{bmatrix}\frac{p_i^r+p_j^r}{p_i+p_j}\end{bmatrix}.\label{eqk2}
\end{equation}
Another fundamental theorem of Loewner's says that the function $f(t)=t^r$ is operator monotone 
if and only if $0\le r\le 1.$
Thus all matrices $L_r$ are positive semidefinite if and only if $0\le r\le 1.$
From the work of Kwong and Audenaert cited above it follows that
all matrices $K_r$ are positive semidefinite if and only if $-1\le r\le 1$
and positive definite if and only if $-1<r<1.$

In their work\cite{rbhol} Bhatia and Holbrook studied the matrices $L_r$ for values of $r$ outside the interval $[0,1].$
Among other things, they showed that
when $1<r<2$ every matrix $L_r$ has exactly one positive eigenvalue.
This is in striking contrast to the case $0<r<1,$
in which all eigenvalues of $L_r$ are positive.
This led the authors \cite{rbhol} to make a conjecture about the signature of eigenvalues of $L_r$ as $r$ varies over real numbers.

Let $A$ be any $n\times n$ Hermitian matrix.
The {\it inertia} of $A$ is the triple
$${\rm In} (A) = (\pi(A), \zeta (A), \nu(A)),$$
where $\pi(A),$ $\zeta(A)$ and $\nu(A)$ are respectively the numbers of positive, zero and negative eigenvalues of $A.$
By the results of Loewner cited above
$\In\, L_r=(n,0,0)$ when $0<r<1,$
and the result of Bhatia-Holbrook says that $\In\, L_r=(1,0,n-1)$
when $1<r<2.$
The conjecture in \cite{rbhol} described the inertia of $L_r$ for other values of $r.$

In \cite{rbsano} Bhatia and Sano made two essential contributions to this problem.
They provided a better understanding of the problem for the range $1<r<2,$
and they also obtained a solution for the range $2<r<3.$
Let $\mathcal{H}_1$ be the $(n-1)$-dimensional subspace of $\mathbb{C}^n$
defined as
\begin{equation}
\mathcal{H}_1=\left\{x\in\mathbb{C}^n:\sum_{i=1}^{n}x_i=0\right\}.\label{eqr1}
\end{equation}
An $n\times n$ Hermitian matrix is said to be {\it conditionally positive definite} (cpd) if 
$\langle x,Ax\rangle\ge 0$ for all $x\in\mathcal{H}_1.$
It is said to be {\it conditionally negative definite} (cnd) if $-A$ is cpd.
If $A$ is nonsingular and cnd with all entries nonnegative, then $\In\, A=(1,0,n-1).$
Bhatia and Sano \cite{rbsano} showed that the matrix $L_r$ is cnd when $1<r<2,$
thus explaining the result in \cite{rbhol}.
They also showed that $L_r$ is cpd when $2<r<3.$

In the same paper \cite{rbsano} the authors found an interesting difference between the
inertial properties of $L_r$ and $K_r$ in the range $2<r<3.$
They showed that $K_r$ is nonsingular and cnd in the interval $1<r<3,$
and hence $\In\, K_r=(1,0,n-1)$ for such $r.$
Thus, there arises the problem of studying $\In\, K_r$ parallel to that of $\In\, L_r.$

The inertia of $L_r$ was completely determined by Bhatia, Friedland and Jain in \cite {bfj}.
The corresponding theorem on $K_r$ was proved by us shortly afterwards.
This was announced in \cite{bj}.
The aim of the present paper is to publish our proof.
Our main result is the following:

\begin{thm}\label{thmk1}
Let $p_1<p_2<\cdots<p_n$ and $r$ be any positive real numbers and let $K_r$ be the matrix defined in \eqref{eqk2}.
Then 
\begin{itemize}
\item[(i)] $K_r$ is singular if and only if $r$ is an odd integer smaller than $n.$
\item[(ii)] When $r$ is an odd integer smaller than or equal to $n,$
the inertia of $K_r$ is given as follows
$$\In\, K_r=\begin{cases}
 \left(\lceil\frac{r}{2}\rceil, n-r,\lfloor\frac{r}{2}\rfloor\right) & r=1(\, \textrm{mod}\, 4)\\
 \left(\lfloor\frac{r}{2}\rfloor, n-r,\lceil\frac{r}{2}\rceil\right) & r=3\, (\textrm{mod}\, 4).
\end{cases}$$
\item[(iii)] Suppose $k<r<k+2<n,$
where $k$ is an odd integer.
Then
$$\In\, K_r=\begin{cases}
 \left(\lceil\frac{k}{2}\rceil,0,n-\lceil\frac{k}{2}\rceil\right) & k=1\, (\textrm{mod}\, 4)\\
 \left(n-\lceil\frac{k}{2}\rceil,0,\lceil\frac{k}{2}\rceil\right) & k=3\, (\textrm{mod}\, 4).\end{cases}$$
\item[(iv)] If $n$ is odd,
then $\In\, K_r=\In\, K_n$ for $r>n-2$;
and if $n$ is even,
then $\In\, K_r=\In\, K_n=\left(\frac{n}{2},0,\frac{n}{2}\right)$ for $r>n-1.$
\end{itemize}
\end{thm}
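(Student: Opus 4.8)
The plan is to treat the odd-integer values of $r$ and the open intervals between them by different methods, tying them together through the continuity of eigenvalues. Since the entries of $K_r$ are real-analytic in $r$, the eigenvalues $\lambda_1(r)\le\cdots\le\lambda_n(r)$ vary continuously (indeed analytically) with $r$; hence $\pi(K_r)$ and $\nu(K_r)$ are constant on every open interval of $r$-values on which $K_r$ is nonsingular, and the inertia can change only when eigenvalues cross $0$. Thus the whole theorem reduces to (a) locating the singular points, (b) computing the inertia exactly at them, and (c) determining how many eigenvalues change sign, and in which direction, as $r$ passes through each.

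First I would dispose of the odd integers, which is the cleanest step and yields both part (ii) and the ``only if'' half of part (i). When $r$ is an odd integer the polynomial $p^r+q^r$ is divisible by $p+q$, so $(K_r)_{ij}=\sum_{l=0}^{r-1}(-1)^l p_i^{\,r-1-l}p_j^{\,l}$. Writing $V=[\,p_i^{\,l}\,]_{1\le i\le n,\,0\le l\le r-1}$ for the $n\times r$ Vandermonde matrix and $M$ for the $r\times r$ matrix carrying $(-1)^l$ on its anti-diagonal, this reads $K_r=VMV^{T}$. When $r\le n$ the matrix $V$ has full column rank $r$, so by Sylvester's law of inertia $\In K_r=\In M+(0,n-r,0)$. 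A direct diagonalisation of $M$ pairs the index $l$ with $r-1-l$, leaving the single middle entry $(-1)^{(r-1)/2}$; each pair contributes one positive and one negative eigenvalue, and the middle entry contributes the sign $(-1)^{(r-1)/2}$. This is exactly the inertia in part (ii), and in particular $\zeta(K_r)=n-r>0$ when $r<n$, so $K_r$ is singular there.

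For the open intervals I would prove part (iii) by a perturbation analysis at the endpoints. Fix an odd integer $k<n$. From the Vandermonde factorisation $\ker K_k=\mathcal H_k:=\{x:\sum_i x_i p_i^{\,j}=0,\ j=0,\dots,k-1\}$, of dimension $n-k$, while on the complement $K_k$ has the inertia of $M$. As $r$ moves away from $k$ the $\pi(M)$ positive and $\nu(M)$ negative eigenvalues persist with their signs, while the $n-k$ eigenvalues vanishing at $k$ move off $0$ at a rate governed, to first order, by the compression to $\mathcal H_k$ of $\dot K_k=\partial_r K_r|_{r=k}$, whose entries are $(p_i^{\,k}\ln p_i+p_j^{\,k}\ln p_j)/(p_i+p_j)$. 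The decisive claim is that this compressed derivative is \emph{definite}, of sign $(-1)^{(k+1)/2}$; granting it, all $n-k$ formerly-zero eigenvalues emerge on the same side of $0$ for $r>k$ (and on the opposite side for $r<k$), and combining them with the persistent part reproduces precisely the two cases of part (iii). To evaluate the sign I would use the representation $\langle x,\dot K_k x\rangle=2\int_0^{\infty}\big(\sum_i x_i p_i^{\,k}\ln p_i\,e^{-p_i s}\big)\big(\sum_j x_j e^{-p_j s}\big)\,ds$, obtained from $1/(p_i+p_j)=\int_0^\infty e^{-(p_i+p_j)s}\,ds$, together with the fact that for $x\in\mathcal H_k$ the second factor vanishes to order $k$ at $s=0$.

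Two points remain, and I expect the genuine difficulty to lie in the first. To make the interval argument valid I must know that $K_r$ is nonsingular for every $r$ that is not an odd integer below $n$ (the ``if'' half of part (i)), so that no eigenvalue crosses $0$ in the interior of an interval. I would attack this through the congruence $K_r=\diag(p_i^{(r-1)/2})\,\widetilde K_r\,\diag(p_i^{(r-1)/2})$, which by Sylvester's law reduces $\In K_r$ to that of the difference kernel $\widetilde K_r=\big[\Phi_r(t_i-t_j)\big]$ with $\Phi_r(u)=\cosh(ru)/\cosh u$ and $t_i=\tfrac12\ln p_i$; the Fourier transform of $\Phi_r$ carries a factor $\cos(\pi r/2)$, which vanishes exactly at the odd integers and should account for all singular $r$. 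The cleanest route would be a Cauchy-type formula for $\det K_r$ and its minors exhibiting an explicit scalar factor in $r$ whose only zeros in $(0,n)$ are the odd integers; this would simultaneously prove nonsingularity and, via Jacobi's sign rule, corroborate the inertia. Finally, part (iv) follows with no extra work: the largest odd integer below $n$ is $n-1$ when $n$ is even and $n-2$ when $n$ is odd (with $K_n$ itself nonsingular in the odd case), so beyond it there are no further crossings and the inertia is frozen at the value already computed, which simplifies to $(n/2,0,n/2)$ in the even case.
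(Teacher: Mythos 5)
Your treatment of part (ii) is sound and is in fact the paper's own argument: the factorization $K_r=W^*VW$ with $W$ the $r\times n$ Vandermonde matrix and $V$ the alternating antidiagonal matrix, combined with the generalized Sylvester law, is exactly how the paper proves (ii). The rest of the proposal, however, rests on two claims that you do not prove, and the methods you sketch for them would fail. First, the perturbation step: you need the compression of $\dot K_k=\partial_r K_r|_{r=k}$ to $\mathcal{H}_k=\ker K_k$ to be \emph{strictly} definite of sign $(-1)^{(k+1)/2}$. Your integral representation writes $\langle x,\dot K_k x\rangle=2\int_0^\infty u(s)v(s)\,ds$ with $u(s)=\sum_i x_ip_i^k\ln p_i\,e^{-p_is}$ and $v(s)=\sum_j x_je^{-p_js}$; these are two unrelated oscillating functions (note $\ln p_i$ changes sign across $p_i=1$), and the fact that $v$ vanishes to order $k$ at $s=0$ puts no constraint on the sign of the integral. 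Moreover, the theorem itself only forces this compression to be \emph{semi}definite (an analytic eigenvalue branch crossing zero can have vanishing first derivative), so strict definiteness is a genuinely stronger statement requiring its own proof; without strictness, your first-order analysis cannot conclude that all $n-k$ zero eigenvalues leave $0$ on the same side.

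Second --- and you correctly identify this as the crux --- nonsingularity of $K_r$ for every $r$ that is not an odd integer below $n$. Your Fourier-transform idea collapses precisely where you need it: for $r>1$ the function $\cosh(ru)/\cosh u$ is unbounded, has no Fourier transform, and the $\cos(\pi r/2)$ factor formula is valid only for $|r|<1$; and no Cauchy-type closed formula for $\det K_r$ with non-integer $r$ is known or supplied. The paper never proves such a global nonsingularity lemma. It proves nonsingularity only for $n$ odd and $r>n-1$ (Corollary \ref{cork3}), via Descartes' rule of signs applied to $f(x)=\sum_j c_j(x^r+p_j^r)/(x+p_j)$ (Theorem \ref{thmk2}), and it obtains part (iii) --- and with it the remaining nonsingularity --- by computing the inertia directly on each interval: the identity $K_r=D^{r-1}E+ED^{r-1}-DK_{r-2}D$ shows inductively that $K_r$ is definite of the appropriate sign on the subspace $\mathcal{H}_j$ of codimension $j=\lceil k/2\rceil$ (not on $\mathcal{H}_k$), giving $n-j$ eigenvalues of one sign, while Cauchy interlacing against the already-settled smaller cases supplies the remaining $j$ eigenvalues of the opposite sign; this is run as a ``snaking'' induction alternately increasing $n$ and $r$. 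As written, your proposal reduces the theorem to two unproved statements, each of which is at least as hard as the parts of the theorem they are meant to establish.
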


There is a striking similarity and a striking difference between the behaviour of the signs of eigenvalues of $L_r$ and $K_r.$
As $r$ moves over $(0,\infty)$ the eigenvalues of both flip signs at certain integral values of $r.$
For $L_r$ these flips take place at all integers $r\le n-1,$
and each time all but one eigenvalue change signs.
For $K_r$ the flips take place at all odd integers $r\le n-1.$
At $r=1$ all but one eigenvalue change signs,
and after that all but two eigenvalues change signs.
\vskip0.1in
\begin{center}
\begin{figure}[ht]
\includegraphics[width=4.5in]{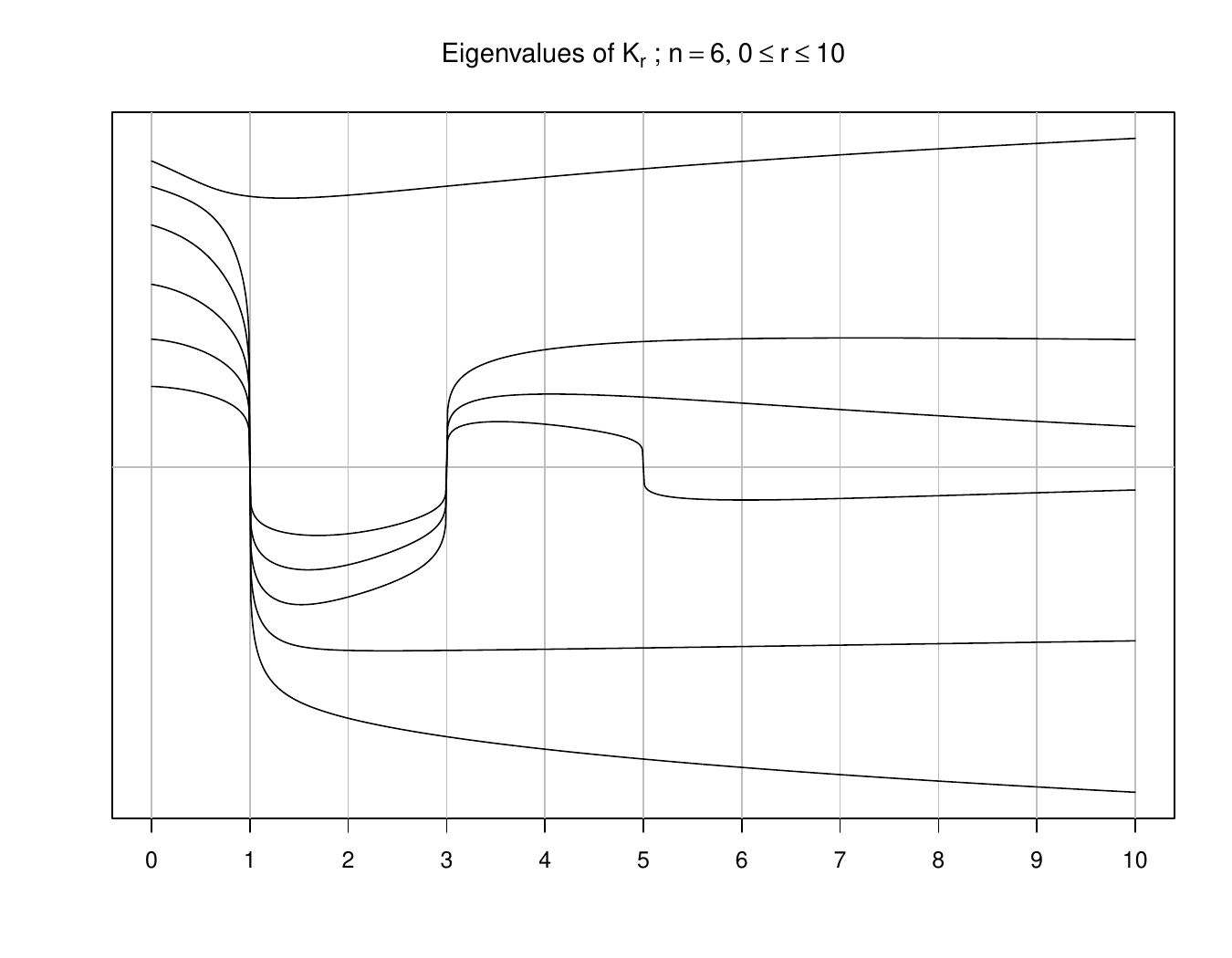}
\caption{}
 \end{figure}
\end{center}

Figure 1 is a schematic representation of the eigenvalues of $K_r$ for $n=6$ and $r\ge 0.$
\vskip.1in
In an earlier paper\cite{bj} we studied the inertia of another family $P_r=\begin{bmatrix}(p_i+p_j)^r\end{bmatrix}.$
The structure of the proof there has been the template of subsequent works on this kind of problem.
See, e.g., \cite{bfj} and the recent work \cite{st} on the Kraus matrix.
(Warning$:$ The authors \cite{st} use the symbol $K_r$ for something different from our Kwong matrix).
Our proof here follows the same steps as in these papers$;$
the details are different at some crucial points.

\section{Proof of Theorem \ref{thmk1}}
Two Hermitian matrices $A$ and $B$ are said to be {\it congruent} if there exists an invertible matrix $X$ such that $B=X^*AX.$
The Sylvester law of inertia says that $A$ and $B$ are congruent if and only if $\In\, A=\In\, B.$
\vskip.1in
Let $D$ be the diagonal matrix $D=\diag(p_1,\ldots,p_n).$ Then for every $r>0$
\begin{equation}
K_{-r}=D^{-r}K_rD^{-r},\label{eqk3}
\end{equation}
and hence, 
\begin{equation}
\In\, K_{-r}=\In\, K_r.\label{eqk4}
\end{equation}
The substitution $p_i=\e^{2x_i},$ $x_i\in\mathbb{R},$ gives
\begin{equation}
K_r=\Delta\tilde{K}_r\Delta,\label{eqk5}
\end{equation}
where $\Delta=\diag\left(\e^{(r-1)x_1},\ldots,\e^{(r-1)x_n}\right),$ and
\begin{equation}
\tilde{K}_r=\begin{bmatrix}\frac{\cosh\, r(x_i-x_j)}{\cosh\, (x_i-x_j)}\end{bmatrix}.\label{eqk6}
\end{equation}
By Sylvester's law $\In\, \tilde{K}_r=\In\, K_r.$
When $n=2$ we have 
$$\tilde{K}_r=\begin{bmatrix}
1 & \frac{\cosh\, r(x_1-x_2)}{\cosh\, (x_1-x_2)}\\
\frac{\cosh\, r(x_1-x_2)}{\cosh\, (x_1-x_2)} & 1\end{bmatrix}.$$
So, $\det\, \tilde{K}_r=1-\frac{\cosh^2r(x_1-x_2)}{\cosh^2(x_1-x_2)}.$
This is positive if $0< r<1,$
zero if $r=1,$
and negative if $r>1.$
The inertia of $K_r$ is $(2,0,0)$ in the first case,
$(1,1,0)$ in the second case, and $(1,0,1)$ in the third.
All assertions of Theorem \ref{thmk1} are thus valid in the case $n=2.$
\vskip.1in
We will use the following extension of Sylvester's law.
A proof is given in \cite{bj}.

\begin{prop}\label{propl1}
Let $n \ge r,$ 
and let $A$ be an $r \times r$ Hermitian matrix and $X$ an $r \times n$ matrix of rank 
$r.$ Then
\begin{equation}
{\rm In} \,\, X^{\ast} AX = {\rm In} \,\, A + (0, n-r, 0).   \label{eq13}
\end{equation}
\end{prop}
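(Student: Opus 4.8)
The plan is to reduce the statement to the classical (square) Sylvester law of inertia by embedding $X$ into an invertible $n\times n$ matrix. Since $X$ is $r\times n$ of rank $r$ and $r\le n$, its $r$ rows are linearly independent vectors in $\mathbb{C}^n$. First I would extend this family to a basis of $\mathbb{C}^n$, that is, choose an $(n-r)\times n$ matrix $Y$ so that the $n\times n$ block matrix $Z=\begin{bmatrix}X\\ Y\end{bmatrix}$ is invertible. This is the one place where the full-rank hypothesis is genuinely used.

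Next I would introduce the padded matrix $\tilde A=\begin{bmatrix}A & 0\\ 0 & 0\end{bmatrix}$, an $n\times n$ Hermitian matrix whose leading $r\times r$ block is $A$ and whose remaining entries vanish. A direct block multiplication then gives $Z^{*}\tilde A Z = X^{*}AX$, because the zero block annihilates every contribution involving $Y$. Since $Z$ is invertible, the classical Sylvester law yields $\In\, X^{*}AX=\In\, Z^{*}\tilde A Z=\In\,\tilde A$. Finally, as $\tilde A$ is block diagonal with blocks $A$ and $0_{n-r}$, its spectrum is that of $A$ together with $n-r$ additional zeros, so $\In\,\tilde A=\In\, A+(0,n-r,0)$, which is exactly the claimed identity.

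The argument has no real obstacle: the only nonroutine ingredient is the existence of the completion $Y$, which is the standard fact that a linearly independent family in $\mathbb{C}^n$ extends to a basis; everything else is a block computation and an appeal to the square case of Sylvester's law. I would remark that one can argue alternatively by splitting $\mathbb{C}^n=\ker X\oplus(\ker X)^{\perp}$, where the kernel accounts for the $n-r$ zero eigenvalues and $X$ restricts to an isomorphism of $(\ker X)^{\perp}$ onto $\mathbb{C}^r$ on which the square law applies; but the completion approach is cleaner and avoids having to fix orthonormal bases.
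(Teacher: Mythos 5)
Your proof is correct. Note that the paper does not prove Proposition \ref{propl1} itself; it explicitly defers the proof to the reference \cite{bj}, and the argument given there is precisely the one you propose: complete $X$ to an invertible $n\times n$ matrix $Z$, pad $A$ to the block matrix $\begin{bmatrix}A & 0\\ 0 & 0\end{bmatrix}$, observe that $X^*AX=Z^*\begin{bmatrix}A & 0\\ 0 & 0\end{bmatrix}Z$, and apply the classical Sylvester law together with the obvious inertia count for the padded block matrix. So your proposal is essentially the same as the intended proof, with no gaps.
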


We now prove part (ii) of the theorem.
Let $r$ be an odd integer,
$r\le n.$ Then
$$\frac{p_i^r+p_j^r}{p_i+p_j}=p_i^{r-1}-p_i^{r-2}p_j+p_i^{r-3}p_j^2-\cdots + p_j^{r-1}.$$
So the matrix $K_r$ can be factored as
$$K_r=W^*VW,$$
where $W$ is the $r\times n$ Vandermonde matrix given by
$$W=\begin{bmatrix}1 & 1 & \cdots & 1\\
p_1 & p_2 & \cdots & p_n\\
\vdots & \vdots & \vdots\vdots\vdots & \vdots\\
p_1^{r-1} & p_2^{r-1} & \cdots & p_n^{r-1}\end{bmatrix}.$$
and $V$ is the $r\times r$ antidiagonal matrix with entries $(1,-1,1,-1,\cdots,-1,1)$
down its sinister diagonal.
So by the generalised Sylvester's law \eqref{eq13} we have for every odd integer $r\le n,$
$$\In\, K_r=\In\, V+(0,n-r,0).$$
The matrix $V$ is nonsingular and its eigenvalues are $\pm 1.$
In the case $r=1\, (\textrm{mod}\, 4),$
$\tr\, V=1$ and the multiplicity of $1$ as an eigenvalue of $V$ exceeds by one the multiplicity of $-1.$
In the case $r=3\, (\textrm{mod}\, 4),$
$\tr\, V=-1$ and the multiplicity of $-1$ as an eigenvalue of $V$ exceeds by one the multiplicity of $1.$
This establishes part (ii) of Theorem \ref{thmk1}.
\vskip.1in
Next, let $c_1,c_2,\ldots,c_n$ be real numbers,
not all of which are zero,
and let $f$ be the function on $(0,\infty)$ defined as
\begin{equation}
f(x)=\sum_{j=1}^{n}c_j\frac{x^r+p_j^r}{x+p_j}.\label{eqk7}
\end{equation}

\begin{thm}\label{thmk2}
Let $n$ be an odd number.
Then for every positive real number $r>n-1,$
the function $f$ in equation \eqref{eqk7}
has at most $n-1$ zeros in $(0,\infty).$
\end{thm}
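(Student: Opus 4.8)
The plan is to recast the statement as the assertion that the functions $v_j(x)=\frac{x^r+p_j^r}{x+p_j}$, $j=1,\dots,n$, form a Chebyshev (Haar) system on $(0,\infty)$. Indeed, if some nonzero $f=\sum_j c_j v_j$ vanished at $n$ distinct points $0<t_1<\cdots<t_n$, then $(c_1,\dots,c_n)$ would be a nonzero null vector of the matrix $\big[v_j(t_i)\big]=\big[\tfrac{t_i^r+p_j^r}{t_i+p_j}\big]$, forcing its determinant to vanish. Hence it suffices to prove that this $n\times n$ ``rectangular Kwong determinant'' is nonzero for every choice of distinct positive numbers $t_1<\cdots<t_n$, the $p_j$ being the given fixed points. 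First I would apply the substitution already used in \eqref{eqk5}, writing $t_i=e^{2x_i}$ and $p_j=e^{2y_j}$; pulling the positive diagonal factors $e^{(r-1)x_i}$ and $e^{(r-1)y_j}$ out of the rows and columns reduces the problem to showing
\[
\det\Big[\tfrac{\cosh r(x_i-y_j)}{\cosh(x_i-y_j)}\Big]\neq 0 \qquad\text{whenever } x_1<\cdots<x_n,\ y_1<\cdots<y_n .
\]
In other words, I must show that the translation kernel $\phi(u)=\cosh(ru)/\cosh(u)$ is strictly sign-regular of order $n$.

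To analyze this kernel I would exploit the representation $\frac{1}{t+p}=\int_0^\infty e^{-s(t+p)}\,ds$, which gives $\frac{t^r+p^r}{t+p}=\int_0^\infty\big[(t^r e^{-st})\,e^{-sp}+e^{-st}\,(p^r e^{-sp})\big]\,ds$, exhibiting the Kwong kernel as a superposition of rank-two kernels. Feeding this into the Cauchy--Binet (basic composition) formula of total positivity expresses $\det\big[\tfrac{t_i^r+p_j^r}{t_i+p_j}\big]$ as an integral, over ordered tuples in the parameter space $\{1,2\}\times(0,\infty)$, of a product of two determinants: a ``$t$-side'' generalized Vandermonde assembled from the columns $t\mapsto e^{-st}$ and $t\mapsto t^r e^{-st}$, and the symmetric ``$p$-side'' determinant. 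If each of these two factors can be shown to have one definite sign for almost every ordered configuration, the integrand has constant sign and the integral is nonzero, which is exactly what we want.

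The main obstacle is precisely the sign analysis of these constituent generalized Vandermonde determinants, and this is where both hypotheses are used. The weight $t^r$ raises the effective degree of the exponential-polynomial system, and the condition $r>n-1$ is what guarantees that this weighted system stays a Markov (Chebyshev) system of the correct order; quantitatively, it is the same strict positivity of the factors $r-i+k$ (for $0\le i,k\le n-1$) that already appears when one differentiates $x^r$ times a polynomial of degree $\le n-1$, and it is what prevents any degeneration of these determinants. The parity of $n$ enters through the fixed sign of the row/column reversal needed to bring the arguments into increasing order, together with the alternating signs produced by expanding $1/\cosh$: only for odd $n$ do these contributions reinforce to a strictly definite sign rather than cancel. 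I expect verifying this sign-definiteness---and checking that the integrand is not almost-everywhere zero, so that a \emph{strict} inequality holds---to be the crux, following the same scheme as \cite{bj} and \cite{bfj} but, as the authors warn, differing in the details at exactly this step.

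Finally, to promote ``at most $n-1$ distinct zeros'' to the full count with multiplicity one would invoke the confluent (extended Chebyshev) version of the same determinant identity, replacing coincident $t_i$ by successive derivatives. For the intended application to the nonsingularity of $K_r$, where the zeros under scrutiny are the distinct points $p_1,\dots,p_n$, the distinct-point statement already suffices, so I would treat the confluent refinement as a secondary matter rather than part of the main line of argument.
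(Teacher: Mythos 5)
Your reduction of the theorem to the nonvanishing of $\det\bigl[(t_i^r+p_j^r)/(t_i+p_j)\bigr]$ for all distinct $t_1<\cdots<t_n$ is correct; this is exactly Corollary~\ref{cork3} of the paper, which the paper deduces \emph{from} Theorem~\ref{thmk2}, so you are proposing to reverse the logical order, which is legitimate in principle. The genuine gap is in the step you yourself call the crux: the sign-definiteness of the integrand in the basic composition formula. That claim is not merely left unproven --- it is false. With your rank-two representation $\frac{t^r+p^r}{t+p}=\int_0^\infty\bigl[(t^re^{-st})e^{-sp}+e^{-st}(p^re^{-sp})\bigr]\,ds$, the composition formula integrates products $\det[M(t_i,\omega_k)]\cdot\det[N(\omega_k,p_j)]$ over configurations $\omega$ in $\{1,2\}\times(0,\infty)$, where $M(t,(1,s))=t^re^{-st}$, $M(t,(2,s))=e^{-st}$, $N((1,s),p)=e^{-sp}$, $N((2,s),p)=p^re^{-sp}$. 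For the mixed configuration $\omega=\bigl((1,s_1),(2,s_2)\bigr)$ with $t_1<t_2$ and $p_1<p_2$ one computes
\[
\det[M]=e^{-s_1t_2-s_2t_1}\bigl(e^{(s_1-s_2)(t_2-t_1)}t_1^r-t_2^r\bigr),\qquad
\det[N]=e^{-s_1p_2-s_2p_1}\bigl(e^{(s_1-s_2)(p_2-p_1)}p_2^r-p_1^r\bigr),
\]
so at $s_1=s_2$ the product is negative, while for $s_1-s_2$ large positive both factors are positive and the product is positive. The same two-signed behaviour persists inside mixed configurations for every odd $n\ge 3$ (localize the extra pure-exponential columns at the smallest $t_i$'s by taking their $s$-parameters huge, and repeat the computation on the complementary block). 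Thus the integrand genuinely oscillates, and the determinant is nonzero only because of cancellation, which your scheme has no mechanism to control. This is not a repairable technicality: Remark~2 of the paper (the matrix $K_3(1,2,5,10)$ has $2\times 2$ minors $-5$ and $35$) shows the Kwong kernel is not sign-regular, so constituent determinants of even order cannot all carry one sign, and your hope that the parity of $n$ rescues the argument ``through row/column reversal signs'' has no actual mechanism behind it.

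For contrast, the paper's proof avoids determinants and total positivity altogether. It multiplies $f$ by $\prod_j(x+p_j)$ to obtain the generalized polynomial $g(x)=\sum_{i=0}^{n-1}\alpha_ix^i+\sum_{i=0}^{n-1}\beta_ix^{r+i}$; the hypothesis $r>n-1$ enters only to make the exponents $0,\ldots,n-1,r,\ldots,r+n-1$ increasing, so that Descartes' rule of signs bounds the number of positive zeros of $g$ by the number $s$ of sign changes in the coefficient string $(\alpha_0,\ldots,\alpha_{n-1},\beta_0,\ldots,\beta_{n-1})$. The companion function $g_0$, built from the Loewner kernel $(x^r-p_i^r)/(x-p_i)$, has the same coefficients up to signs which, precisely because $n$ is odd, flip at every second position of the string of $2n$ coefficients; the elementary fact that a sequence and its alternately-flipped version together have at most $2n-1$ sign changes gives $s+s_0\le 2n-1$. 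Since $g_0$ visibly vanishes at $p_1,\ldots,p_n$, one has $s_0\ge n$, whence $s\le n-1$; a Lagrange-interpolation observation guarantees $g\not\equiv 0$. So the two hypotheses you tried to house inside a total-positivity framework (oddness of $n$, and $r>n-1$) actually do their work through this entirely different and much more elementary sign-counting mechanism.
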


\begin{proof}
Consider the function
$g$ defined as
\begin{equation}
g(x)=f(x)\prod_{j=1}^{n}(x+p_j).\label{eqk8}
\end{equation}
Expanding the product,
we can write
\begin{align}
g(x)&=\alpha_0+\alpha_1x+\cdots+\alpha_{n-1}x^{n-1}\nonumber\\
&\ +\beta_0x^r+\beta_1x^{r+1}+\cdots+\beta_{n-1}x^{r+n-1}.\label{eqk9}
\end{align}
The function $g$ can be written as
$$g(x)=x^rh_1(x)+h_2(x),$$
where
$$h_1(x)=\sum_{i=1}^{n}c_i\prod_{j\ne i}(x+p_j)\textrm{  and  }h_2(x)=\sum_{i=1}^{n}c_ip_i^r\prod_{j\ne i}(x+p_j).$$
Since both $h_1$ and $h_2$ are Lagrange interpolation polynomials of degree at most $n-1$ and not all $c_i$ are zero,
neither of the polynomials $h_1$ and $h_2$ is identically zero.
Hence $g$ is not identically zero.
Now, consider the function $g_0$ defined as
$$g_0(x)=\sum_{i=1}^{n}c_i\frac{x^r-p_i^r}{x-p_i}\prod_{j=1}^{n}(x-p_j).$$
Then a calculation shows that
\begin{align}
g_0(x)&=-\alpha_0+\alpha_1x+\cdots+\alpha_{n-2}x^{n-1}-\alpha_{n-1}x^{n-1}\nonumber\\
&\ +\beta_0x^r-\beta_1x^{r+1}+\cdots+\beta_{n-1}x^{r+n-1}.\label{eqk10}
\end{align}
By the Descartes rule of signs (\cite{gg}, p.46), the number of positive zeros of $g$ is no more than the number of sign changes in the sequence of coefficients
$$\left(\alpha_0,\alpha_1,\ldots,\alpha_{n-1},\beta_0,\beta_1,\ldots,\beta_{n-1}\right).$$
Let this number of sign changes be $s,$
and let $s_0$be the number of sign changes in the coefficients in \eqref{eqk10}.
Since $n$ is odd, we have 
$s+s_0\le 2n-1.$
We know that $g_0$ has at least $n$ positive zeros $p_1,\ldots,p_n.$
So, $s_0\ge n,$
and hence $s\le n-1.$
Hence $g$ has at most $n-1$ positive zeros,
and therefore so does $f.$
\end{proof}

We can deduce the following.

\begin{cor}\label{cork3}
Let $n$ be an odd number, and let $p_1,\ldots,p_n$ and $q_1,\ldots,q_n$ be two $n$-tuples of distinct positive numbers.
Then for every $r>n-1,$
the $n\times n$ matrix
\begin{equation}
\begin{bmatrix}\frac{p_i^r+q_j^r}{p_i+q_j}\end{bmatrix}\label{eqn1}
\end{equation}
is nonsingular.
So, in particular if $n$ is odd,
then for every $r>n-1,$ the matrix $K_r$ is nonsingular.
\end{cor}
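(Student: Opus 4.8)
The plan is to deduce nonsingularity from Theorem \ref{thmk2} by a short linear-algebra argument, turning a would-be kernel vector into an excess of zeros of the function $f$ in \eqref{eqk7}. To show that the matrix $M=\bigl[(p_i^r+q_j^r)/(p_i+q_j)\bigr]$ is nonsingular, I would show that its kernel is trivial: the only column vector $c=(c_1,\ldots,c_n)^T$ with $Mc=0$ is $c=0$.

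So suppose $Mc=0$. The crux is to recognize the entries of $Mc$ as values of a function of the form \eqref{eqk7}. Define
$$f(x)=\sum_{j=1}^{n}c_j\frac{x^r+q_j^r}{x+q_j},$$
taking the numbers $q_1,\ldots,q_n$ as the parameters; this is legitimate since Theorem \ref{thmk2} applies to any $n$-tuple of distinct positive numbers, not to a privileged one. Then the $i$-th coordinate of $Mc$ is precisely $f(p_i)$, so the assumption $Mc=0$ says that $f(p_1)=\cdots=f(p_n)=0$. In other words, $f$ has at least the $n$ distinct positive zeros $p_1,\ldots,p_n$.

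Now I would invoke Theorem \ref{thmk2}: since $n$ is odd and $r>n-1$, if the $c_j$ were not all zero then $f$ could have at most $n-1$ zeros in $(0,\infty)$, contradicting the existence of the $n$ zeros just found. Hence $c=0$, which proves that $M$ is nonsingular. The final assertion about $K_r$ then follows immediately by specializing $q_i=p_i$, so that $M$ becomes the Kwong matrix $\bigl[(p_i^r+p_j^r)/(p_i+p_j)\bigr]=K_r$.

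I do not expect a genuine obstacle here, since all the analytic content is already packaged in Theorem \ref{thmk2}; the only point demanding care is the index bookkeeping — pairing the rows of $M$ with the evaluation points $p_i$ and the columns with the interpolation nodes $q_j$, and observing that $f$ is built from the $q_j$ while being evaluated at the $p_i$. Once this pairing is set up correctly, the contradiction between ``$n$ zeros'' and ``at most $n-1$ zeros'' closes the argument.
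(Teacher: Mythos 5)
Your proof is correct and is essentially identical to the paper's own argument: both convert a nonzero kernel vector of the matrix \eqref{eqn1} into a function of the form \eqref{eqk7} (built from the $q_j$, evaluated at the $p_i$) with $n$ positive zeros, contradicting Theorem \ref{thmk2}. The row/column bookkeeping you emphasize is exactly what the paper does implicitly, so there is nothing to add.
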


\begin{proof}
If the matrix \eqref{eqn1} is singular, then there exists a nonzero tuple $(c_1,\ldots,c_n)$ such that
$$f(x)=\sum_{j=1}^{n}c_j\frac{x^r+q_j^r}{x+q_j}$$
has at least $n$ zeros $p_1,\ldots,p_n.$
But this is not possible by Theorem \ref{thmk2}.
So, the matrix \eqref{eqn1} and hence, the matrix $K_r$ is nonsingular for all odd $n$ and $r>n-1.$
\end{proof}

We complete the proof of Theorem \ref{thmk1}
using a "snaking" process$:$
the validity of the theorem is extended by alternatively increasing $n$ and $r.$
\vskip.1in
For any positive numbers $p$ and $q$ and any real $r,$ we have
$$\frac{p^r+q^r}{p+q}=p^{r-1}-p\frac{p^{r-2}+q^{r-2}}{p+q}q+q^{r-1}.$$
This gives us the identity
\begin{equation}
K_r=D^{r-1}E-DK_{r-2}D+ED^{r-1},\label{eqk11}
\end{equation}
where $D$ is the diagonal matrix $\diag(p_1,\ldots,p_n)$
and $E$ the matrix all whose entries are one.
For $1\le j\le n,$
let $\mathcal{H}_j$ be the subspace of $\mathbb{C}^n$ defined as
\begin{align*}
 \mathcal{H}_j &= \left \{ x : \sum x_i = 0, \sum p_i x_i = 0, \ldots,\sum p_i^{j-1} x_i 
= 0   \right \} \\
&=\left \{ x : Ex = 0, E Dx = 0, \ldots,E D^{j-1} x = 0    \right \}.
\end{align*}
Evidently, $\textrm{dim}\, \mathcal{H}_j=n-j$ and $\mathcal{H}_{j+1}\subset\mathcal{H}_j.$
\vskip.1in
It will be convenient to use the notation$K_r^{(n)}$ to indicate an $n\times n$ matrix of the type $K_r.$
When the superscript $n$ is not used,
it will be understood that a statement about $K_r$ is true for all $n.$
\vskip.1in
Recall that $K_r$ is known to be positive definite for $0<r<1.$
The relation \eqref{eqk3} then shows that it has the same property for $-1<r< 0.$
$K_0$ is the Cauchy matrix $\begin{bmatrix}\frac{2}{p_i+p_j}\end{bmatrix}$ and is positive definite.
Thus $K_r$ is positive definite for $-1<r<1.$
\vskip.1in
Now let $1<r<3.$
Then $-1<r-2<1.$
Using the identity \eqref{eqk11} we see that
if $x$ is a nonzero vector in $\mathcal{H}_1,$
then
$$\langle x,K_rx\rangle=-\langle Dx,K_{r-2}Dx\rangle<0.$$
So, the matrix $K_r$ is conditionally negative definite and has at least $n-1$ negative eigenvalues.
Since all entries of $K_r$ are positive, it has at least one positive eigenvalue.
Thus
\begin{equation}
\In\, K_r=(1,0,n-1),\label{eqk12}
\end{equation}
for $1<r<3.$
By Corollary \ref{cork3},
$K_r^{(3)}$ is nonsingular for $r>2.$
So, $\In\, K_r^{(3)}$ does not change for $r>2.$
This shows that $\In\, K_r^{(3)}=(1,0,2)$ for all $r>1.$
So, the theorem is established when $n=3.$
\vskip.1in
Now let $n>3$ and $3<r<5.$
Using the identity \eqref{eqk11} and the case $1<r<3$ of the theorem that has been established
we see that if $x$ is a nonzero vector in $\mathcal{H}_2,$
then $\langle x, K_rx\rangle>0.$
So, the matrix $K_r$ has at least $n-2$ positive eigenvalues.
By the $n=3$ case already proved,
we know that $K_r$ has a $3\times 3$ principal submatrix
with two negative eigenvalues.
So, by Cauchy's interlacing principle,
$K_r$ must have at least two negative eigenvalues.
We conclude that
\begin{equation}
\In\, K_r=(n-2,0,2),\textrm{ if }3<r<5.\label{eqk14}
\end{equation}
In particular, this shows that $\In\, K_r^{(5)}=(3,0,2)$ if $3<r<5,$
and since $K_r^{(5)}$ is nonsingular for $r>4,$
it has the same inertia for all $r>3.$
So, Theorem \ref{thmk1} is established when $n=5.$
Next consider the case $n=4.$
Let $r>3.$
The matrix $K_r^{(4)}$ has a principal submatrix $K_r^{(3)}$ whose inertia is $(1,0,2).$
So, by the interlacing principle $K_r^{(4)}$ has at least two negative eigenvalues.
On the other hand $K_r^{(4)}$ is a principal submatrix of $K_r^{(5)}$
whose inertia is $(3,0,2).$
So, again by the interlacing principle $K_r^{(4)}$ has at least two positive eigenvalues.
Thus $\In\, K_r^{(4)}=(2,0,2)$ for all $r>3,$
and Theorem \ref{thmk1} is established for $n=4.$
\vskip.1in
This line of reasoning can be continued.
Use the space $\mathcal{H}_3$ at the next stage to go to the interval $5<r<7.$
Then use the established case $n\le 5$ to extend the validity of the theorem to first the case $n=7,$ and then $n=6.$

\section{Remarks}

\begin{enumerate}

\item[1.] In Theorem 1.1(v)  of \cite {bfj} it was shown that all nonzero eigenvalues of $L_r$ are simple.
We have not been able to prove a corresponding statement for $K_r.$
\vskip.1in
\item[2.] An $n \times n$ real matrix $A$ is said to be {\it strictly sign-regular} (SSR) if for every $1\le k\le n,$ all $k \times k$ sub-determinants of 
$A$ are nonzero and have the same sign. If this is true for all $1\le k\le m$ for some $m< n,$ then we say $A$ is in the class $\textrm{SSR}_{\mbox{m}}.$See \cite{k} for a detailed study of such matrices.
In \cite{bfj} it was shown that the matrix $L_r$ is in the class $\textrm{SSR}_{\mbox{r}}$ if $r=1,2,\ldots,n-1,$
and in the class SSR for all other $r>0.$
This fact was then used to prove the simplicity of nonzero eigenvalues of $L_r.$

Let $n=4$ and consider the matrix $K_3(1,2,5,10).$
It can be seen that the leading $2\times 2$ principal subdeterminant of this matrix is $-5,$ while the determinant of the top right $2\times 2$ submatrix is $35.$
So this matrix is not in the class $\textrm{SSR}_{\mbox{2}}.$
\vskip.1in
\item[3.] Let $p_1<p_2$ and $q_1<q_2$ be two ordered pairs of distinct positive numbers such that $\{p_1,p_2\}\cap\{q_1,q_2\}$ is nonempty.
With a little work it can be shown that the determinant of the $2\times 2$ matrix $\begin{bmatrix}\frac{p_i^r+q_j^r}{p_i+q_j}\end{bmatrix}$ is positive if $0<r<1$ and negative if $r>1.$
Using this one sees that for $n=3$ and $r\ne 1,$ the matrix $K_r$ is SSR.
\vskip.1in
\item[4.] There is a curious and intriguing connection between the inertia of $K_r$ and that of another family.
For $r\ge 0,$
let $B_r$ be the $n\times n$ matrix
\begin{equation}
B_r=\begin{bmatrix}|p_i-p_j|^r\end{bmatrix}.\label{eqb1}
\end{equation}
This family has been studied widely in connection with interpolation of scattered data and splines.
The inertias of these matrices were studied by Dyn, Goodman and Micchelli in \cite{dyn}.
In view of our theorem their result can be stated as
\begin{equation}
\In\, B_r=\In\, K_{r+1}\label{eqbk}
\end{equation}
for all $r\ge 0.$
It will be good to have an understanding of what leads to this remarkable equality.
By Sylvester's law \eqref{eqbk} is equivalent to saying that $B_r$ and $K_{r+1}$ are congruent.
In a recent work \cite{ks} the authors construct an explicit congruence between these two matrices, thus providing an alternative proof of \eqref{eqbk}.

\end{enumerate}

\end{document}